\newtheorem{theorem}{Theorem}
\newcommand{\Z}{{\mathbb Z}}
\newtheorem{cor}[theorem]{Corollary}
\newcommand{\textdef}{\textit}
\newenvironment{proof}{\noindent{\bf Proof.\,}}{\hfill$\Box$}
\begin{document}

\title{Vertex identifying codes for the $n$-dimensional lattice}
\author{Brendon Stanton\thanks{The author's research partially supported by NSF grant DMS-0901008.}\\Department of Mathematics and Computer Science\\The Citadel\\Charleston, SC 29409}
\maketitle
\begin{abstract}An $r$-identifying code on a graph $G$ is a set $C\subset V(G)$ such that for every vertex in $V(G)$, the intersection of the radius-$r$ closed neighborhood with $C$ is nonempty and different.   Here, we provide an overview on codes for the $n$-dimensional lattice, discussing the case of 1-identifying codes, constructing a sparse code for the 4-dimensional lattice as well as showing that for fixed $n$, the minimum density of an $r$-identifying code is $\Theta(1/r^{n-1})$.
\end{abstract}
\maketitle



\section{Introduction}

Vertex identifying codes were introduced by Karpovsky, Chakrabarty, and Levitin in~\cite{Karpovsky1998} as a way to help with fault diagnosis in multiprocessor computer systems.  Amongst the many results in that paper, an interesting result is that if $n=2^k-1$ for some integer $k$, we can find a code of optimal density for the $n$-dimensional lattice by using a Hamming code.  Denote by $\mathcal{D}(G,r)$ the minimum possible density of an $r$-identifying code for a graph $G$.  Let $L_n$ denote the $n$-dimensional lattice.  In ~\cite{Stanton2011a}, we present a slight generalization of this proof.

\begin{theorem}[\cite{Stanton2011a}]\label{thm:domsettheorem}
  Let $D$ be a dominating set for the $n$-dimensional hypercube, then $\mathcal{D}(L_n,1)\le |D|/2^n$.
\end{theorem}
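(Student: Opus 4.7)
The plan is to construct the identifying code explicitly by a periodic lifting of $D$. Let $\bar v \in \{0,1\}^n$ denote the coordinate-wise reduction of $v \in \Z^n$ modulo $2$, viewed as a vertex of the $n$-dimensional hypercube, and set
\begin{equation*}
  C \;=\; \{v \in \Z^n : \bar v \in D\}.
\end{equation*}
Each of the $2^n$ parity classes in $\Z^n$ has density exactly $1/2^n$, so the density of $C$ in $L_n$ is $|D|/2^n$. It remains to verify that $C$ is $1$-identifying.

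The first step is a clean description of the identifying signature $N[v]\cap C$. Inside the closed neighborhood $N[v]=\{v\}\cup\{v\pm e_i:1\le i\le n\}$, the parity of $v$ is $\bar v$, while both $v+e_i$ and $v-e_i$ have parity $\bar v+e_i$. Therefore
\begin{equation*}
  N[v]\cap C \;=\; \bigl(\{v\}\cap C\bigr) \,\cup\, \bigcup_{i \,:\, \bar v+e_i \in D}\{v-e_i,\,v+e_i\}.
\end{equation*}
Because $D$ dominates the hypercube, at least one of $\bar v,\,\bar v+e_1,\,\dots,\,\bar v+e_n$ lies in $D$, so $N[v]\cap C$ is never empty.

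The main obstacle is the separation property $N[u]\cap C\neq N[v]\cap C$ for $u\neq v$. When $u$ and $v$ are at $L_n$-distance at least $3$, their closed neighborhoods are disjoint and the nonempty signatures automatically differ. Otherwise $u-v\in\{\pm e_i,\,\pm 2e_i,\,\pm e_i\pm e_j\}$ with $i\ne j$, and the intersection $N[u]\cap N[v]$ has at most two vertices. The crucial observation is that the display above always contributes the \emph{pair} $\{v-e_i,\,v+e_i\}$ to $N[v]\cap C$ whenever $\bar v+e_i\in D$, and in each of these subcases this two-element pair cannot fit inside the small set $N[u]\cap N[v]$. Pushing this through -- and, in the distance-$1$ case, invoking the same remark symmetrically with the roles of $u$ and $v$ exchanged -- forces $\bar v+e_k\notin D$ for every $k$ and $\bar v\notin D$, contradicting that $D$ is a dominating set. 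The lifting construction is the conceptual heart of the proof; the case analysis in the separation step is then routine once the parity description of the signature is in hand.
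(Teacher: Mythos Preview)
Your argument is correct and is exactly the construction the paper has in mind: the paper does not give a detailed proof here but cites \cite{Stanton2011a} and remarks that one simply repeats the Karpovsky--Chakrabarty--Levitin Hamming-code construction with the Hamming code replaced by an arbitrary dominating set, which is precisely your parity-lifting $C=\{v\in\Z^n:\bar v\in D\}$. Your parity description of $N[v]\cap C$ and the observation that each direction contributes a full pair $\{v-e_i,v+e_i\}$ is the right way to carry out the separation check; the distance-$1$ case indeed needs the symmetric appeal to $u$ (to rule out $\bar v=\bar u+e_i\in D$), and with that the case analysis is complete.
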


The proof of this comes from replacing Hamming Codes (which are already dominating sets) with the more general dominating sets to get bounds in the case that $n\neq 2^k-1$.  For small values of $n$, we use Table 6.1 of \cite{Cohen1997} to get good bounds in Figure~\ref{fig:boundtable}.

\begin{figure}[ht]
$$\begin{array}{|ccccc|}
\hline
    && \mathcal{D}(L_1,1)&=&1/2  \\
    && \mathcal{D}(L_2,1)&=&7/20^\text{~\cite{Ben-Haim2005}}  \\
    \hline
    && \mathcal{D}(L_3,1)&=&1/4    \\
  1/5&\le& \mathcal{D}(L_4,1) &\le& 2/9^{\text{[Theorem~\ref{thm:4Dcode}]}} \\
  \hline
  1/6&\le& \mathcal{D}(L_5,1) &\le& 7/32^{\text{[Theorem~\ref{thm:domsettheorem}}]}  \\
    1/7&\le& \mathcal{D}(L_6,1) &\le& 3/16^{\text{[Theorem~\ref{thm:domsettheorem}}]}   \\
    \hline
    && \mathcal{D}(L_7,1)&=&1/8    \\
  1/9&\le& \mathcal{D}(L_8,1) &\le& 1/8  \\
  \hline
  1/10&\le& \mathcal{D}(L_9,1) &\le& 31/256^{\text{[Theorem~\ref{thm:domsettheorem}}]}  \\
  1/11&\le& \mathcal{D}(L_{10},1) &\le& 15/128^{\text{[Theorem~\ref{thm:domsettheorem}}]}  \\
  \hline
\end{array}$$
\caption{A table of bounds of densities of codes for small values of $n$.  All bounds not cited are due to \cite{Karpovsky1998}.}\label{fig:boundtable}
\end{figure}

The result for $L_4$ is proven in Section~\ref{4Dsection}.  For larger values of $n$ we use this theorem in conjunction with a result of Kabatyanski{\u\i} and Panchenko\cite{Kabatyanskiui1988} to get a good asymptotic bound.

\begin{cor}\label{cor:asymptotic}  There is a constant $b$ such that for sufficiently large $n$:
$$\frac{1}{n+1}\le \mathcal{D}(L_{n},1) \leq \left( 1 + \frac{b \ln \ln n}{\ln n} \right) \frac{1}{n+1}. $$
\end{cor}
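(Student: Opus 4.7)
The plan is to prove the two inequalities separately. The lower bound $\frac{1}{n+1} \leq \mathcal{D}(L_n,1)$ is the sphere-covering style bound established by Karpovsky, Chakrabarty and Levitin in \cite{Karpovsky1998}, so I simply invoke it.

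For the upper bound, Theorem~\ref{thm:domsettheorem} reduces the problem to finding a small dominating set of the $n$-dimensional hypercube $Q_n$: any dominating set $D$ of $Q_n$ yields an identifying code of $L_n$ of density at most $|D|/2^n$. It therefore suffices to exhibit, for every sufficiently large $n$, a dominating set of $Q_n$ of size at most
$$\left(1 + \frac{b \ln \ln n}{\ln n}\right) \frac{2^n}{n+1}.$$
Since a dominating set of $Q_n$ is exactly a covering code of radius $1$ in the binary Hamming space $\{0,1\}^n$, the task becomes one of constructing a small radius-$1$ covering code.

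The existence of such a code is precisely the content of the theorem of Kabatyanski{\u\i} and Panchenko \cite{Kabatyanskiui1988}: they prove that the minimum size of a radius-$1$ covering code in $\{0,1\}^n$ is at most $(1 + O(\ln\ln n / \ln n)) \cdot \frac{2^n}{n+1}$, matching the trivial sphere-covering lower bound $\lceil 2^n/(n+1) \rceil$ up to a vanishing factor. Choosing $b$ to be an appropriate multiple of the implicit constant in their bound and then substituting into Theorem~\ref{thm:domsettheorem} gives the stated upper bound.

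If one tried to prove the corollary from scratch, the real obstacle would be the Kabatyanski{\u\i}--Panchenko construction itself, which is the deep nontrivial input; for the purposes of the present paper, however, the proof is essentially a direct composition of Theorem~\ref{thm:domsettheorem} with their theorem, once one observes the identification of dominating sets of $Q_n$ with radius-$1$ covering codes. The only bookkeeping that remains is absorbing constants and verifying that the asymptotic shape $\bigl(1+b\ln\ln n/\ln n\bigr)\tfrac{1}{n+1}$ is preserved under division by $2^n$, which it clearly is.
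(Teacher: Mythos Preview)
Your proposal is correct and matches the paper's approach exactly: the paper states the corollary immediately after explaining that it follows by combining Theorem~\ref{thm:domsettheorem} with the Kabatyanski{\u\i}--Panchenko covering-code bound, with the lower bound attributed to~\cite{Karpovsky1998}. No separate proof is written out in the paper, so your explicit identification of dominating sets of $Q_n$ with radius-$1$ covering codes and the accompanying bookkeeping is, if anything, more detailed than what appears there.
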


Finally, in  Section~\ref{section:generalbounds}, we prove both an upper and lower bound for $\mathcal{D}(L_n,r)$ and show:

\begin{theorem}\label{thm:thetabound}
  For fixed $n$, $\mathcal{D}(L_n,r)=\Theta(1/r^{n-1})$ as $r\rightarrow\infty$.
\end{theorem}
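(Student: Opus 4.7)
The plan is to prove the matching bounds $\mathcal{D}(L_n,r)=\Omega(1/r^{n-1})$ and $\mathcal{D}(L_n,r)=O(1/r^{n-1})$ separately, treating $n$ as a fixed constant so that all implicit constants may depend on $n$.

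For the lower bound I would use a double-counting argument on adjacent pairs. If $C$ is any $r$-identifying code and $u,v$ are neighbors in $L_n$, then the identifying property forces some codeword to lie in $B_r(u)\triangle B_r(v)$. Since $d(u,v)=1$, this symmetric difference sits inside $\{w:d(w,u)=r\}\cup\{w:d(w,v)=r\}$, a set of size $\Theta(r^{n-1})$. Restricting to a cube $Q_N$ of side $N$, which contains $(1+o(1))nN^n$ edges, each edge demands at least one codeword; conversely a single codeword $c$ belongs to the symmetric differences of only $O(r^{n-1})$ edges, since any such edge has an endpoint on the sphere of radius $r$ about $c$ (size $\Theta(r^{n-1})$), each with at most $n$ neighbors. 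Because codewords more than $r+1$ outside $Q_N$ contribute nothing, letting $N\to\infty$ gives $\mathcal{D}(L_n,r)\geq c_n/r^{n-1}$.

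For the upper bound I would exhibit an explicit periodic construction. Setting $a=\lfloor 2r/(n-1)\rfloor$, let $C$ be the union of the lines parallel to the $x_1$-axis through the sublattice points $(x_2,\ldots,x_n)\in(a\Z)^{n-1}$; then the density equals $a^{-(n-1)}=O(r^{-(n-1)})$. The choice of $a$ ensures every vertex has such a line within $\ell_1$-distance $r$ in its last $n-1$ coordinates, so $B_r(v)\cap C\neq\emptyset$. For each line $L_k$ meeting $B_r(v)$, the intersection $B_r(v)\cap L_k$ is an integer interval whose midpoint is $v_1$ and whose half-length equals $r-\delta_k(v)$, where $\delta_k(v):=\sum_{i\geq 2}|v_i-k_ia|$. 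The remaining task is to show that the multi-line data $\{(k,B_r(v)\cap L_k)\}$ uniquely determines $v$: the midpoints recover $v_1$, and comparing the recovered $\delta_k$ for neighboring indices pins down each $v_i$ in turn.

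The main obstacle is this last step: two distinct vertices could in principle produce the same intersection pattern if their coordinates conspire so that the sets of visible lines coincide and the piecewise-linear functions $k_i\mapsto|v_i-k_ia|$ and $k_i\mapsto|v_i'-k_ia|$ agree across the visible range. Ruling this out should reduce to a short case analysis, but if the plain product-lattice construction fails on a boundary configuration I would patch it by a small perturbation---for instance shifting the sublattice by an offset depending on another coordinate, or taking the union of a constant number of translated copies whose combined density remains $O(r^{-(n-1)})$---a standard device in the identifying-code literature.
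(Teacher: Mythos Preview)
Your lower bound is fine and is essentially the paper's argument: both observe that distinguishing a vertex from its $2n$ neighbours forces codewords into the annulus $B_{r+1}(v)\setminus B_{r-1}(v)$ of size $\Theta(r^{n-1})$, and then average over $v$ (the paper squeezes out a $\lceil\log_2(2n+1)\rceil$ factor by treating all $2n+1$ vertices at once, but for the $\Theta$-statement your edge-by-edge count is enough).

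The upper bound has a genuine gap, and it is exactly the step you flag but do not resolve. Your plain product-lattice code is \emph{not} $r$-identifying. For $n=3$ with $a=r$, take $v=(0,v_2,v_3)$ and $v'=(0,v_2,-v_3)$ with $0<v_3<v_2$ and $v_2+v_3<r$: the only lines meeting either $r$-ball are those through $(0,0)$ and $(r,0)$, and on each the intersection interval (midpoint $0$, half-length $r-v_2-v_3$ resp.\ $v_2-v_3$) is identical, so $I_r(v)=I_r(v')$. This is not a boundary artifact---with spacing $a\approx 2r/(n-1)$ a generic vertex sees too few lines to determine every coordinate, and one coordinate can flip sign invisibly; the analogous failure already occurs for $n=2$. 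Declaring that some unspecified shift or finite union of translates will repair this is not a proof. The paper closes precisely this gap with two deliberate choices: it shrinks the spacing to $k=2r/(n+2)$ and keeps only the \emph{odd}-parity sublattice lines, reserving the even-parity points as a reference set $S$. Every vertex then lies within $nk/2$ of some $s\in S$, hence within $(n+2)k/2=r$ of all $2(n-1)$ corners $s\pm ke^{(i)}\in C$; this guarantees, for each coordinate $i$, a pair of visible codeword-lines whose $i$-th coordinates sandwich $v_i$, and a linear equation in the two recovered distances solves for $v_i$ exactly. That parity-plus-tighter-spacing device is the missing ingredient in your sketch.
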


Given a connected, undirected graph $G=(V,E)$, we define
$B_r(v)$, called the ball of radius $r$ centered at $v$ to be
$$B_r(v)=\{u\in V(G): d(u,v)\le r\}. $$

We call any nonempty subset $C$ of $V(G)$ a \textdef{code} and its elements \textdef{codewords}.  A code $C$ is called \textdef{$r$-identifying} if it has the properties:
\begin{enumerate}
\item $B_r(v) \cap C \neq \emptyset$ for all $v$
\item $B_r(u) \cap C \neq B_r(v)\cap C$, for all $u\neq v$
\end{enumerate}
When $r=1$ we simply call $C$ an identifying code.  When $C$ is understood, we define $I_r(v)=I_r(v,C)=B_r(v)\cap C$.  We call
$I_r(v)$ the identifying set of $v$.  If $I_r(u)\neq I_r(v)$ for some $u\neq v$, the we say $u$ and $v$ are \textdef{distinguishable}.  Otherwise, we say they are \textdef{indistinguishable}.

We formally define the $n$-dimensional lattice $L_n=(V,E)$ where $$V=\Z^n,\qquad
E = \left\{\{(x_1,\ldots,x_n),(y_1,\ldots,y_n)\}: \sum_{i=1}^n |x_i-y_i| = 1\right\}.$$

The density of a code $C$ for a finite graph $G$ is defined as $|C|/|V(G)|$.  Let $Q_m$ denote the set of vertices $(x_1, \ldots, x_n)\in
\Z^n$ with $|x_i|\le m$ for all $1\le i \le n$.  We define the
density $D$ of a code $C$ in $L_n$ similarly to how it is defined in~\cite{Charon2002} by
$$D=\limsup_{m\rightarrow\infty}\frac{|C\cap Q_m|}{|Q_m|}.$$

\section{The 4-dimensional case}\label{4Dsection}

The king grid, $G_K$, is defined to be the graph on vertex set $\Z\times\Z$ with edge set $E_K=\{\{u,v\}: u-v\in\{(0,\pm1),(\pm1,0),(1,\pm1),(-1,\pm1)\}\}$.

\begin{theorem}\label{thm:4Dcode}
 $\mathcal{D}(L_4,1)\le 2/9$.
\end{theorem}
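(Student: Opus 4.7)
My approach is to pull back a density-$2/9$ identifying code for the king grid $G_K$ along a $\Z$-linear map $\Z^4\to\Z^2$ that bijects closed unit balls. The fact that $G_K$ is introduced immediately before this theorem is the tell: it is classical that $\mathcal{D}(G_K,1) = 2/9$, realized by an explicit periodic code (Charon--Hudry--Lobstein; see also Cohen et al.), and $2/9$ is exactly the target density in $L_4$.

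The key construction is the $\Z$-linear surjection $\pi : \Z^4 \to \Z^2$ given by
\[
\pi(x_1,x_2,x_3,x_4) = (x_1+x_3+x_4,\ x_2+x_3-x_4).
\]
A direct computation yields $\pi(\pm e_1)=(\pm 1,0)$, $\pi(\pm e_2)=(0,\pm 1)$, $\pi(\pm e_3)=(\pm 1,\pm 1)$, $\pi(\pm e_4)=(\pm 1,\mp 1)$, so the eight vectors $\{\pi(\pm e_i)\}_{i=1}^4$ enumerate exactly the eight king moves. Consequently $\pi$ restricts to a bijection $B_1(v)\to B_1(\pi(v))$ between the $L_4$-ball around $v$ and the $G_K$-ball around $\pi(v)$, for every $v\in\Z^4$.

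Now set $C = \pi^{-1}(C_K)$, where $C_K$ is a density-$2/9$ identifying code for $G_K$. Ball bijectivity immediately gives $B_1(v)\cap C \ne \emptyset$ from $B_1(\pi(v))\cap C_K \ne\emptyset$. If $I_1(u,C)=I_1(v,C)$ for some $u\ne v$, projecting by $\pi$ gives equal identifying sets in $G_K$, whence $\pi(u)=\pi(v)$ by the identifying property of $C_K$; then for any codeword $s$ in the common set, $s-u$ and $s-v$ both lie in $B_1(0)\subset \Z^4$ and have equal $\pi$-image, so ball bijectivity forces $s-u=s-v$, i.e.\ $u=v$, a contradiction. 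For the density, any period lattice $\Lambda_K\subset\Z^2$ of $C_K$ pulls back to a period lattice $\pi^{-1}(\Lambda_K)\subset\Z^4$ of $C$, and the isomorphism $\Z^4/\pi^{-1}(\Lambda_K) \cong \Z^2/\Lambda_K$ induced by $\pi$ shows $C$ inherits density $2/9$.

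The only non-routine input is invoking the density-$2/9$ identifying code on $G_K$; after that, everything hinges on the single combinatorial observation that the four pairs $\{\pm e_i\}_{i=1}^4$ can be routed bijectively onto the four ``axes'' of king moves (horizontal, vertical, and the two diagonals), which is precisely what $\pi$ does.
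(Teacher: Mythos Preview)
Your proof is correct and essentially matches the paper's approach: the paper writes each $v\in\Z^4$ uniquely as $(x,y,0,0)+i(1,1,1,0)+j(1,-1,0,1)$ and sets $\varphi(v)=(x,y)$, which computes to $\varphi(v)=(v_1-v_3-v_4,\,v_2-v_3+v_4)$---the same linear map as your $\pi$ up to a sign flip on the last two coordinates. The paper verifies $\varphi(B_1^{L_4}(v))=B_1^{G_K}(\varphi(v))$ and then, to separate $u\ne v$ with $\varphi(u)=\varphi(v)$, checks directly that $d(u,v)\ge 3$ so their balls are disjoint; your ball-bijection argument (equal $\pi$-images of $s-u$ and $s-v$ in $B_1(0)$ force $u=v$) is a slightly slicker route to the same conclusion.
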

\begin{proof}
  The idea of our proof is to take our original copy of a code for the king grid and copy it to two-dimensional cross-sections of $L_4$--shifting it when ``up and to the right'' when moving in the $x_3$ direction and ``up and to the left'' when moving in the  $x_4$ direction.

  Let $C$ the identifying code of density 2/9 for the king grid given by Cohen, Honkala, Lobstein and Z\'{e}mor in~\cite{Cohen2001}.  For the remainder of this proof, let $B_1^{G}(v)$ denote the ball of radius 1 in the graph $G$ and likewise, let $I_1^{G}(v)$ denote the identifying set of $v$ in $G$.

  Let $v\in V(L_4)$.  Since $(1,0,0,0),(0,1,0,0),(1,1,1,0),$ and $(1,-1,0,1)$ are linearly independent, we may write uniquely write $v=(x,y,0,0)+i(1,1,1,0)+j(1,-1,0,1)$.  Next, we define $\varphi: V(L_4)\rightarrow V(G_K)$ by $\varphi(v)= (x,y)$ and then define $$C'=\{v\in V(L_4):\varphi(v)\in C\}.$$  Fixing, $i$ and $j$, we see that $C'$ consists of isomorphic copies of $C$ and so $C'$ has the same density as $C$.

  It is easy to check that $\varphi(B_1^{L_4}(v)) = B_1^{G_K}((x,y))$.  For instance, $\varphi(v+ (1,0,0,0))= (x+1,y)\in B_1^{G_K}(v)$ and $\varphi (v+ (0,0,1,0)) = \varphi((x-1,y-1,0,0)+(i+1)(1,1,1,0)+j(1,-1,0,1)) = (x-1,y-1)\in B_1^{G_K}(x,y)$.  This shows two things.  First, each vertex has a non-empty set, since $|I_1^{L_4}(v)| = |I_1^{G_K}(\varphi(v))|\ge 1$.  Secondly, it shows that if $\varphi(u)\neq \varphi(v)$, then $u$ and $v$ are distinguishable.  Hence, we only need to distinguish between vertices where $\varphi(u)=\varphi(v)$.

  Without loss of generality, let $u=(x,y,0,0)$ and $u=(x,y,0,0) + i(1,1,1,0) + j(1,-1,0,1)$ and so $$d(u,v) = |i + j|+|i-j|+|i|+|j|.$$  If $i$ and $j$ are both non-zero, then either $|i+j|$ or $|i-j|$ is at least 1 and so $d(u,v)\ge 3$.  If $j=0$, then $d(u,v)=3|i|\ge 3$ and likewise if $i=0$ then $d(u,v)\ge 3|j|\ge 3$.  Since $d(u,v)\ge 3$ in all cases, we only need to consider $I_1^{L_n}(v)$.  It is nonempty and doesn't intersect with $B_1^{L_n}(u)$.  Thus, $u$ and $v$ are distinguishable, completing the proof.
\end{proof}

\section{General Bounds and Construction}\label{section:generalbounds}

We finally wish to produce some general bounds for $r$-identifying codes on the $L_n$.  We start with a lower bound proof, in the style of Charon, Honkala, Hudry and Lobstein\cite{Charon2001}.  First, we define $b_k^{(n)} = |B_k(v)|$ for $v\in V(L_n)$.

\begin{theorem}\label{thm:rlowerbound}
  The minimum density of an $r$-identifying code for $L_n$ is at least $$\mathcal{D}(L_n,r)\ge\frac{(n-1)!\lceil \log_2(2n+1)\rceil}{2^{n+1}r^{n-1} + p_{n-2}(r)}$$ where $p_{n-2}(r)$ is a polynomial in $r$ of degree no more than $n-2$.
\end{theorem}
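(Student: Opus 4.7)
The plan is as follows. For each vertex $v\in V(L_n)$, let $e_1,\ldots,e_n$ denote the standard basis vectors and define the local boundary
\[
S_v \;=\; \bigcup_{i=1}^{n}\Bigl(\bigl(B_r(v)\triangle B_r(v+e_i)\bigr)\,\cup\,\bigl(B_r(v)\triangle B_r(v-e_i)\bigr)\Bigr).
\]
The symmetric-difference triangle inequality $A\triangle B\subseteq(A\triangle C)\cup(C\triangle B)$, applied with $C=B_r(v)$, shows that $B_r(u)\triangle B_r(u')\subseteq S_v$ for any pair $u,u'$ in the $(2n+1)$-element set $\{v,\,v\pm e_1,\ldots,v\pm e_n\}$. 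Intersecting with $C$, the $2n+1$ identifying sets $I_r(v),I_r(v\pm e_i)$ all agree outside $S_v$. Because $C$ is $r$-identifying, these sets are pairwise distinct, so their restrictions to $C\cap S_v$ are $2n+1$ distinct subsets of $C\cap S_v$, forcing $|C\cap S_v|\ge\lceil\log_2(2n+1)\rceil$.

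The next step is a double count. Summing over $v$ in a large box $Q_m$,
\[
|Q_m|\,\lceil\log_2(2n+1)\rceil \;\le\; \sum_{v\in Q_m}|C\cap S_v| \;=\; \sum_{c\in C}\bigl|\{v\in Q_m:c\in S_v\}\bigr|.
\]
The key observation is that $\{v:c\in S_v\}$ has size independent of $c$. Indeed, for neighbors $w,w'$ in $L_n$ one has $c\in B_r(w)\triangle B_r(w')$ exactly when $\{d(c,w),d(c,w')\}=\{r,r+1\}$, and a short case analysis then shows $c\in S_v$ if and only if $d(c,v)\in\{r,r+1\}$. Hence $\{v:c\in S_v\}$ is the union of the spheres of radii $r$ and $r+1$ about $c$, of total size $N(n,r):=b_{r+1}^{(n)}-b_{r-1}^{(n)}$.

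To finish, observe that $b_k^{(n)}$ is a polynomial in $k$ of degree $n$ with leading term $2^nk^n/n!$, so $N(n,r)=b_{r+1}^{(n)}-b_{r-1}^{(n)}$ is a polynomial of degree $n-1$ in $r$ with leading term $2n\cdot 2^n r^{n-1}/n!=2^{n+1}r^{n-1}/(n-1)!$; writing $(n-1)!\,N(n,r)=2^{n+1}r^{n-1}+p_{n-2}(r)$ therefore introduces exactly a polynomial $p_{n-2}$ of degree at most $n-2$. Combining with the double count gives $|C\cap Q_{m+r+1}|\cdot N(n,r)\ge|Q_m|\,\lceil\log_2(2n+1)\rceil$; dividing by $|Q_m|$ and taking $\limsup_{m\to\infty}$ (the boundary discrepancy between $Q_m$ and $Q_{m+r+1}$ is only $O(m^{n-1})$ while $|Q_m|=(2m+1)^n$) produces the stated bound. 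The conceptual heart, and the step I expect to be the main obstacle to identify, is the first: the symmetric-difference inequality confines all variation among the $2n+1$ identifying sets to the thin shell $S_v$ of size $\Theta(r^{n-1})$, which is exactly what saves the factor of $r$ over the naive ball-covering lower bound. The remaining spherical calculation and the limit argument are routine.
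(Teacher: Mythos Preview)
Your argument is correct and follows essentially the same route as the paper: confine the pairwise differences among the $2n+1$ identifying sets $I_r(v),I_r(v\pm e_i)$ to the shell $B_{r+1}(v)\setminus B_{r-1}(v)$ (your $S_v$ is exactly this set, as your case analysis shows), deduce that the shell contains at least $\lceil\log_2(2n+1)\rceil$ codewords, and turn this into a density bound by double counting. The only noteworthy differences are cosmetic: the paper reaches the shell directly via the observation that points in $B_{r-1}(v)$ lie in every $B_r(s)$ and points outside $B_{r+1}(v)$ lie in none, whereas you route through the symmetric-difference triangle inequality; and the paper bounds the shell size above by $2^n\bigl(\binom{n+r-1}{n-1}+\binom{n+r}{n-1}\bigr)$ via a sign-choice overcount, while you identify $b_k^{(n)}$ as a degree-$n$ polynomial with leading coefficient $2^n/n!$ and difference it exactly, which is a bit cleaner.
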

\begin{proof}
  Let $v\in V(L_n)$ and $u_1,u_2,\ldots, u_{2n}$ be its neighbors.  If $d(v,x)>r+1$, then it is easy to see that $d(u_i,x)\ge r+1$ for all $i$.  Likewise, it is easy to check that if $d(v,x)\le r-1$, then $d(u_i,x)\le r$ for all $i$.  In other words, all vertices outside of $B_{r+1}(v)$ are not in $B_r(s)$ for any $s\in S=\{v,u_1,u_2,\ldots, u_{2n}\}$ and all vertices inside of $B_{r-1}(v)$ are in $B_r(s)$ for all $s\in S$.

  Next, let $C$ be an $r$-identifying code for $L_n$.  For $s,s'\in S$ with $s\neq s'$, we must have $I_r(s)\triangle I_r(s')\subset B_{r+1}(v)\setminus B_{r-1}(v)$.  Let $K(s)=I_r(s)\cap (B_{r+1}(v)\setminus B_{r-1}(v))$.  We claim for $K(s)\neq K(s')$.  Suppose not.  Then $I_r(s) = K(s) \cup (C\cap B_{r-1}(v)) = I_r(s')$ and so they are not distinguishable.  Hence, $K(s)$ must be distinct for each $s\in S$.  Since the minimum number of elements of a set to produce $2n+1$ distinct subsets is $\lceil\log_2(2n+1)\rceil$, there must be $\lceil\log_2(2n+1)\rceil$ codewords in $B_{r+1}(v)\setminus B_{r-1}(v)$.  We refer to the methods used by Charon, Honkala, Hudry and Lobstein \cite{Charon2001} to show this gives the lower bound:

  $$\frac{\lceil \log_2(2n+1)\rceil}{b_{r+1}^{(n)} - b_{r-1}^{(n)}}.$$

  It is easy to check that $b_r^{(n)}$ is the number of solutions in integers to \begin{equation}\label{ineq:ballsize}|x_1|+|x_2|+\cdots + |x_n| \le r\end{equation} and so $b_{r+1}^{(n)}-b_{r-1}^{(n)}$ is the number of solutions to $$|x_1|+|x_2|+\cdots + |x_n| = k$$ where $k=r$ or $k=r+1$.  Since the number of solutions to $x_1+x_2+\cdots+x_n=k$ is known to be $\binom{n+k-1}{n-1}$, this gives us an upper bound
  \begin{eqnarray*}
     b_{r+1}^{(n)}-b_{r-1}^{(n)} &\le& 2^n\left(\binom{n+r-1}{n-1}+ \binom{n+r}{n-1}\right) \\
     &\le& 2^n\left(\frac{(r+n-1)^{n-1}}{(n-1)!}+ \frac{(r+n)^{n-1}}{(n-1)!}\right) \\
     &=& \frac{2^{n+1}r^{n-1} + p_{n-2}(r)}{(n-1)!}
  \end{eqnarray*}
  which comes from choosing each term to be either positive or negative and then using a standard binomial inequality.  Plugging this in gives us the result described in the theorem.
\end{proof}



\begin{theorem}\label{thm:upperboundeventheorem}
    If $n$ is odd, $0\le k<n+1$, $r\ge n+2$, and $r\equiv k\pmod{(n+2)}$ then $$\mathcal{D}(L_n,r)\le \frac{(n+2)^{n-1}}{2^{n}(r-k)^{n-1}}. $$
    If $n$ is even, $0\le k<(n+2)/2$, $r\ge (n+2)/2$, and $r\equiv k\pmod{(n+2)/2}$ then $$\mathcal{D}(L_n,r)\le \frac{(n+2)^{n-1}}{2^{n}(r-k)^{n-1}}. $$
\end{theorem}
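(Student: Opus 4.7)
The plan is to construct, for each admissible triple $(n,r,k)$, an explicit periodic code $C\subset\Z^n$ of the stated density. Set $s:=2(r-k)/(n+2)$: the congruence hypothesis on $r$ makes $s$ a positive integer, and the target density rewrites as $1/(2s^{n-1})$, so it suffices to exhibit a sublattice $C\subset\Z^n$ of index $2s^{n-1}$ that is $r$-identifying. A natural candidate is
\[
C = \Bigl\{x\in\Z^n : s\mid x_i \text{ for } 1\le i\le n-1,\ \ x_n+\sum_{i=1}^{n-1}\frac{x_i}{s}\equiv 0 \pmod 2\Bigr\},
\]
a union of ``columns'' parallel to $e_n$ based at the points of $s\Z^{n-1}$ in the first $n-1$ coordinates, whose $e_n$-offsets alternate in a checkerboard pattern on $s\Z^{n-1}$ according to $\sum x_i/s \bmod 2$. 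A basis is $\{se_i+e_n : 1\le i\le n-1\}\cup\{2e_n\}$, whose determinant (expanded along the last row) is $2s^{n-1}$, giving $[\Z^n:C]=2s^{n-1}$ and the required density.

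For the covering property, given $v\in\Z^n$ let $a_i$ be the nearest multiple of $s$ to $v_i$ for $i<n$, and pick $a_n\in\{v_n,v_n\pm 1\}$ to satisfy the parity constraint. Then the $L_1$-displacement from $v$ to $(a_1,\dots,a_n)\in C$ is at most $(n-1)\lfloor s/2\rfloor + 1$, which one checks is at most $r$ under the hypotheses on $r$ and the formula for $s$. For distinguishability, $C$ is translation-invariant under the sublattice $\Lambda := C$, so two same-coset vertices $u\ne v$ satisfy $I_r(v)=I_r(u)+(v-u)$; since $I_r(u)$ is finite and nonempty it cannot equal a nonzero translate of itself, so $I_r(u)\ne I_r(v)$ automatically. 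The remaining case is $u,v$ in distinct cosets of $\Lambda$, where one must produce a witness codeword $c\in C\cap (B_r(u)\triangle B_r(v))$.

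This distinct-coset step is the main obstacle. There are $2s^{n-1}$ cosets, but the symmetries of $C$ (coordinate permutations and sign changes on the first $n-1$ coordinates, together with $e_n$-reflection) reduce the coset-difference $u-v\pmod\Lambda$ to a bounded family of cases independent of $r$. For each such case one locates a witness codeword near the extremal $e_n$-reach of one of the two balls, and the precise calibration $s=2(r-k)/(n+2)$ is what places these codewords on exactly the boundary positions needed so they lie in one ball but not the other; the congruence class $k$ absorbs the slack between $r$ and a pure multiple of the period. The different parity treatments of $n$ enter through whether $s$ must be even (odd $n$, giving period $n+2$) or may be any integer (even $n$, giving period $(n+2)/2$).
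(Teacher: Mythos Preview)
Your construction is \emph{not} the paper's. The paper takes $s=2r_0/(n+2)$ and uses
\[
C_{\text{paper}}=\Bigl\{(sx_1,\dots,sx_{n-1},\ell):\sum_{i=1}^{n-1}x_i\equiv 1\pmod 2,\ \ell\in\Z\Bigr\},
\]
i.e.\ \emph{full} $e_n$-columns over the odd-parity points of $s\Z^{n-1}$, together with the complementary set $S$ of full columns over even-parity points, used only as reference points. Your code instead puts \emph{half}-columns (every other $e_n$-value) over \emph{every} point of $s\Z^{n-1}$. Both have density $1/(2s^{n-1})$, but the paper's code is not a subgroup of $\Z^n$, so your sublattice/translation argument is specific to your construction.

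The gap is in your distinct-coset step. The symmetry group you invoke (signed permutations of the first $n-1$ coordinates and $e_n$-reflection) has order at most $2^n(n-1)!$, a quantity depending only on $n$, whereas $\Z^n/\Lambda$ has $2s^{n-1}$ elements, which grows like $r^{n-1}$. The orbit count under your symmetry group therefore also grows with $r$, so the reduction to ``a bounded family of cases independent of $r$'' is false as stated. Even if some reduction were available, you do not carry out the promised witness-finding, and the sentence about the calibration $s=2(r-k)/(n+2)$ placing codewords ``on exactly the boundary positions needed'' is assertion, not argument. So as written the proof is incomplete.

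The paper sidesteps any pairwise witness search by an explicit \emph{decoding} of $v$ from $I_r(v)$, and this is exactly where the full columns matter. For any codeword column $(\mathbf{c},\cdot)$ meeting $I_r(v)$, the set $\{\ell:(\mathbf{c},\ell)\in I_r(v)\}$ is a genuine interval of integers; its midpoint is $v_n$ and its radius is $r-d\bigl((v_1,\dots,v_{n-1}),\mathbf{c}\bigr)$. Thus from $I_r(v)$ one reads off $v_n$ and, for every column it meets, the $L_1$-distance from $(v_1,\dots,v_{n-1})$ to the column's base. The paper then shows (Step~1) that the $2(n-1)$ ``corners'' $s\pm se^{(i)}$ of some reference point $s\in S$ all lie in $I_r(v)$; comparing the distances to the two corners in each direction $e^{(i)}$ recovers $v_i$. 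Your half-column code may admit an analogous decoding, but with parity-restricted $\ell$-values the interval-midpoint step is no longer immediate, and in any case you have not supplied it.
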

\begin{proof}
  Let $2r_0$ be divisible by $n+2$ and let $k=2r_0/(n+2)$.  We wish to find an $r$-identifying code for $r\ge r_0$.  We define a code $$C=\left\{\left(kx_1,kx_2,\ldots, kx_{n-1},\ell\right): x_1 + x_2 + \cdots + x_{n-1} \equiv 1 \pmod 2 \right\}.$$  Further, let $$S=\left\{\left(kx_1,kx_2,\ldots, kx_{n-1},\ell\right): x_1 + x_2 + \cdots + x_{n-1} \equiv 0 \pmod 2 \right\}.$$
  $C$ will be our code and $S$ will serve as a set of reference points which we will use later.

  We first wish to calculate the density $C\cup S$.  This is simply a tiling of $\Z^{n}$ by the region $[0,k-1]^{n-1}\times\{0\}$ which has only a single codeword in it.  Hence, the density of $C\cup S$ is $1/k^{n-1}=(n+2)^{n-1}/(2^{n-1}r_0^{n-1})$.  Then $C$ is half this density, which is the density stated in the theorem.

  Next, we wish to show that $C$ is an $r$-identifying code for $r\ge r_0$.  Let $e^{(i)}$ represent the vector with a 1 in the $i$th coordinate and a 0 in all other coordinates.  For any vertex $u$, let $u_j$ denote the value of the $j$th coordinate of $u$.

  For $s\in S$, we define the \emph{corners} of $s$ to be the codewords $c$ of the form $c=s \pm ke^{(i)}$ for some $1\le i\le n-1$.

  The remainder of the proof consists of 3 steps:

  \begin{enumerate}
    \item Each vertex $v\in V(G)$ has distance at most $nk/2$ from some $s\in S$ and $v$ has distance at most $r$ from each of the corners of $s$ (in addition, this shows that $I_r(v)$ is nonempty).
    \item If $v=(\mathbf{v},\ell)$, we can uniquely determine $\ell$ from $I_{r}(v)$.  Furthermore, if $c=(\mathbf{c},\ell)\in I_{r}(v)$, we can determine $d(v,c)$.
    \item If $v=(v_1,\ldots, v_{n-1},\ell)$, we can uniquely determine $v_i$ from $I_{r}(v)$ for each $i$.  Thus, $v$ is distinguishable from all other vertices in the graph.
  \end{enumerate}

  \textbf{Step 1}:  Let $v=(v_1,v_2,\ldots, v_{n-1}, \ell)$.  Without loss of generality, we may assume that $(v_1,v_2,\ldots, v_{n-1})\in [0,k]^{n-1}$.    For $i=1,2,\ldots,n-2$ define $$a_i=\left\{\begin{array}{cc}
                                           0 & \text{if $v_i\le k/2$} \\
                                           k & \text{if $v_i> k/2$}
                                         \end{array}
  \right..$$  We then see that $|v_i-a_i|\le k/2$ in either case.  Now consider the vertices $(a_1,a_2,\ldots, a_{n-2},0,\ell)$ and $(a_1,a_2,\ldots, a_{n-2},k,\ell)$.  One of these is in $S$.  Let $a_{n-1}=0$ if the former is in $S$ and $a_{n-1}=k$ if the latter is in $S$.  Then $|v_{n-1}-a_{n-1}|\le k$.  Hence we have
  \begin{eqnarray*}
    d(v,(a_1,a_2,\ldots,a_{n-2},a_{n-1},\ell)) &=& |v_{n-1}-a_{n-1}| + \sum_{i=1}^{n-2} |v_i-a_i| \\
    &\le& k + (n-2)k/2 =nk/2.
  \end{eqnarray*}

  Let $c$ be a corner of $s=(a_1,a_2,\ldots, a_{n-2},a_{n-1},\ell)$.  Then $$d(v,c) \le d(v,s) + d(s,c) \le nk/2 + k = (n+2)k/2 = r_0\le r.$$

  \textbf{Step 2}: Next, we need to determine the last coordinate of $v$.  Write $v=(\mathbf{v},\ell)$.  Suppose that $c=(\mathbf{c},k)\in I_r(v)$.  We then see that $(\mathbf{c},\ell)\in I(v)$ since $d(v,(\mathbf{c},\ell))\le d(v,c)$.  Writing $d(v,(\mathbf{c},\ell))=d_1\le r$, then we see that $(\mathbf{c},\ell\pm j)\in I(v)$ for $j=0,1,\ldots, r-d_1$.  Hence, these codewords form a path of length $2(r-d_1)+1$. Thus, if $\ell_1=\min\{j:(\mathbf{c},j)\in I(v)\}$ and $\ell_2=\max\{j:(\mathbf{c},j)\in I(v)\}$, it follows that $$\ell = \frac{\ell_1+\ell_2}{2}.$$  Furthermore, this tells us once we know $\ell$, we can determine the distance between $v$ and $c$ to be $r - (\ell_2-\ell)$.

  \textbf{Step 3}:  Finally, from Step 1 we know that there is some vertex $s\in S$ such that the codewords $s\pm ke^{(i)}\in I_r(v)$ for each $i$, $1\le i \le n-1$.  Thus, for each $i$ we are guaranteed that there are $m\ge 2$ codewords $c^{(0)},\ldots, c^{(m-1)}$ such that $c^{(j)}=c^{(0)} + 2kje^{(i)}$ and $c^{(j)}\in I_r(v)$ for each $j$.

  Now let $$D=\sum_{{p=1}\atop {p\neq i}}^{n-1} |v_p - c_{p}^{(0)}|.$$  We then see that $d(v,c^{(j)}) = |v_i-c_i^{(j)}| + D$ which is minimized by minimizing $|v_i-c_i^{(j)}|$.  Furthermore, the expression $|v_i-x|$ is unimodal and so the two smallest values of $|v_i-c_i^{(j)}|$ must happen for consecutive integers and they must be amongst our aforementioned $m$ codewords.  Let $a=c^{(\ell)}$ and $b=c^{(\ell+1)}$ be these codewords.  It is easy to check that $a_i\le v_i \le b_i$ by considering evenly spaced point plotted along  the graph of $f(x)=|v_i-x|$.

    This gives
  \begin{eqnarray*}
    d(v,a) &=& v_i-a_i + D \\
    d(v,b) &=& b_i-v_i + D
  \end{eqnarray*}
  Since $a$ and $b$ are codewords, the distances listed above are all known quantities from Step 2.  Subtracting the second line from the first and solving for $v_i$ gives: $$v_i=\frac{d(v,a)-d(v,b)+a_i+b_i}{2}.$$

  Since these are all known quantities, we can compute $v_i$, completing step 3.  Finally, we get the values described in the theorem by taking $r_0$ to be the largest integer smaller than $r$ satisfying the condition that $2r_0/(n+2)$ is an integer, completing the proof.
\end{proof}

\section{Acknowledgements}
I would like to acknowledge the support of a National Science Foundation grant.  Part of this research was done as a research assistant under an NSF grant with co-PIs Maria Axenovich and Ryan Martin.  I would also like to thank  Jonathan D.H. Smith and Cliff Bergman for their help with this paper.  In addition, I would like to thank the referees from a previous submission whose help made this paper much better than it would have been otherwise.

\section{Conclusions}
It is worth noting that the lower bound given in Theorem~\ref{thm:lowerboundtheorem} can only be evaluated as $r\rightarrow\infty$ and not as $n\rightarrow\infty$ since the polynomial in the denominator is a polynomial in $r$, but the coefficients depend on $n$.  However, for fixed $n$ we can make a comparison of the bounds by taking the ratio of the upper bound to the lower bound.  This gives:
\begin{eqnarray*}
 &&\frac{(n+1)^{n-1}}{2^{n}r^{n-1}}\left/ \frac{(n-1)!\lceil \log_2(2n+1)\rceil}{2^{n+1}r^{n-1} + o(r^{n-1})}\right. \\
 &=& \frac{2^{n+1}r^{n-1} + o(r^{n-1})}{2^nr^{n-1}}\cdot\frac{(n+1)^{n-1}}{(n-1)!\lceil\log_2(2n+1)\rceil}\\
 &\approx& (2+o(1))\cdot\frac{(n+1)^{n-1}}{(n-1)^{n-1}}\cdot\frac{e^{n-1}}{\sqrt{2\pi n}\lceil\log_2(2n+1)\rceil} \\
  &\approx& \frac{2e^{n+1}}{\sqrt{2\pi n}\lceil\log_2(2n+1)\rceil}\\
\end{eqnarray*}
and so our lower bound differs from our upper bound by slightly less than a multiplicative factor of $e^n$ when $r\gg n\gg0$.

\bibliographystyle{plain}

\end{document}